\newcommand\cF{{\mathcal F}}
\newcommand\cG{{\mathcal G}}
\newcommand\cH{{\mathcal H}}
\theoremstyle{plain}
\newtheorem{theorem}{Theorem}[section]
\newtheorem{lemma}[theorem]{Lemma}
\theoremstyle{definition}
\newtheorem{claim}[theorem]{Claim}
\newcommand\cref[1]{Corollary~\ref{cor:#1}}
\title{On $L$-close Sperner systems}
\author{D\'aniel T. Nagy$^1$ \hskip 1truecm Bal\'azs Patk\'os$^{1,2}$ 
\medskip \\
\small $^1$ Alfr\'ed R\'enyi Institute of Mathematics, P.O.B. 127, Budapest H-1364, Hungary.\\
\small  
$^2$Lab. of Combinatorial and Geometric Structures, Moscow Inst. of Physics and Technology\\
\medskip
\small \texttt{\{nagydani,patkos\}@renyi.hu}}
\date{}
\begin{document}

\maketitle

\begin{abstract}
    For a set $L$ of positive integers, a set system $\cF \subseteq 2^{[n]}$ is said to be $L$-close Sperner, if for any pair $F,G$ of distinct sets in $\cF$ the skew distance $sd(F,G)=\min\{|F\setminus G|,|G\setminus F|\}$ belongs to $L$. We reprove  an extremal result of Boros, Gurvich, and Milani\v c on the maximum size of $L$-close Sperner set systems for $L=\{1\}$ and generalize to $|L|=1$ and obtain slightly weaker bounds for arbitrary $L$. We also consider the problem when $L$ might include 0 and reprove a theorem of Frankl, F\"uredi, and Pach on the size of largest set systems with all skew distances belonging to $L=\{0,1\}$.
\end{abstract}
\section{Introduction}

One of the first results of extremal finite set theory is Sperner's theorem \cite{S} that states that if for any pair $F,F'$ of distinct sets in a set systems $\cF\subseteq 2^{[n]}$ we have $\min\{|F\setminus F'|,|F'\setminus F|\}\ge 1$, then $|\cF| \le \binom{n}{\lfloor n/2\rfloor|}$ holds. Set systems with this property are called \textit{antichains} or \textit{Sperner systems}. This theorem has lots of generalizations and applications in different areas of mathematics (see the book \cite{E} and Chapter 3 of \cite{GP}). Recently, Boros, Gurvich, and Milani\v c introduced the following notion:
given a positive integer $k$,  we say that a set system $\cF$ is \textit{$k$-close Sperner} if every pair $F,G\in \cF$ of distinct sets satisfies
$1\le \min\{|F\setminus G||,|G\setminus F|\}\le k$. In particular, $\cF$ is 1-close Sperner if every pair $F,G\in \cF$ of distinct sets satisfies
$\min\{|F\setminus G||,|G\setminus F|\}=1$. (The authors used the unfortunate \textit{$k$-Sperner} term which, throughout the literature, refers to set systems that are union of $k$ many antichains. That is why we decided to use instead the terminology $k$-close Sperner systems.) Boros, Gurvich, and Milani\v c's motivation to study these set systems comes from computer science: they wanted to compare them to other classes of Sperner systems (see also \cite{BGMa} and \cite{CM}). They obtained some structural results from which they deduced the following extremal theorem. For a set $F\subseteq [n]=\{1,2,\dots,n\}$, its \textit{characteristic vector} $v_F$ is a 0-1 vector of length $n$ with $(v_F)_i=1$ if and only if $i\in F$.

\begin{theorem}[Boros, Gurvich, Milani\v c \cite{BGMb}]\label{regi}
If the set system $\{\emptyset\}\neq\{F_1,F_2\dots,F_m\}\subseteq 2^{[n]}$ is 1-close Sperner, then the characteristic vectors $v_{F_1},v_{F_2},\dots,v_{F_m}$ are linearly independent over $\mathbb{R}$. In particular, $m\le n$.
\end{theorem}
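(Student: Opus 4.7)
My plan is to establish the stronger statement that the Gram matrix $G$ with entries $G_{ij}=\langle v_{F_i},v_{F_j}\rangle = |F_i\cap F_j|$ is positive definite, and then deduce linear independence (and hence $m\le n$) from the invertibility of $G$. The main preliminary observation is that the $1$-close Sperner condition completely pins down each intersection size in terms of $s_i:=|F_i|$: assuming without loss of generality $s_i\le s_j$, the identity
\[
|F_i\setminus F_j|-|F_j\setminus F_i|=s_i-s_j\le 0
\]
forces $|F_i\setminus F_j|$ to be the minimum, so $|F_i\setminus F_j|=1$, and therefore $|F_i\cap F_j|=s_i-1=\min(s_i,s_j)-1$. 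The hypothesis $\cF\neq\{\emptyset\}$ rules out $\emptyset\in\cF$ (any other member would then produce skew distance $0$), so every $s_i\ge 1$.

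Setting $t_i=s_i-1\ge 0$, the Gram matrix takes the form $G=I+N$, where $N_{ij}=\min(t_i,t_j)$ for all $i,j$ (including on the diagonal). The second step is an explicit factorization $N=LL^T$, where $L$ is the $m\times T$ matrix with $T=\max_i t_i$ and $L_{ik}=1$ precisely when $k\le t_i$; a one-line check gives $(LL^T)_{ij}=\#\{k:k\le t_i\text{ and }k\le t_j\}=\min(t_i,t_j)$. Hence $N$ is positive semidefinite, so $G=I+N$ has every eigenvalue at least $1$, and in particular $\det G\ge 1$. This yields linear independence of $v_{F_1},\ldots,v_{F_m}$, and since these are vectors in $\mathbb R^n$, the bound $m\le n$ follows immediately.

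I do not foresee any real obstacle: the whole argument reduces to one structural identity about intersection sizes followed by a textbook PSD factorization. The only points requiring minor care are the base case $|\cF|=1$ (where one uses $F_1\neq\emptyset$ directly to obtain $v_{F_1}\neq 0$) and pairs with equal sizes $s_i=s_j$, for which both set-differences must equal $1$ and the intersection formula continues to hold. One pleasant by-product of this approach is that linear independence depends only on the size sequence of $\cF$, not on the specific sets involved.
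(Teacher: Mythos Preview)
Your argument is correct. The key identity $|F_i\cap F_j|=\min(s_i,s_j)-1$ for $i\neq j$ is exactly what the $1$-close Sperner hypothesis gives, your exclusion of $\emptyset$ from $\cF$ is sound, and the factorization $N=LL^T$ with $L_{ik}=\mathbf{1}[k\le t_i]$ is a clean way to see that the ``min'' matrix is positive semidefinite. Since $G=I+N\succ 0$, the Gram matrix is nonsingular and the characteristic vectors are independent.

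This is, however, a genuinely different route from the paper's. The paper attaches to each $F_i$ the affine polynomial $p_{F_i}(x)=|F_i|-v_{F_i}\cdot x-1$, shows these are linearly independent by a triangular evaluation argument (Lemma~\ref{indep_poly}), and then uses Blokhuis's trick to show that the constant polynomial $\mathbf{1}$ can be adjoined while preserving independence, forcing $m+1\le \dim M_1=n+1$. Your approach is more elementary and yields the full original statement directly (independence of the $v_{F_i}$ themselves, not of auxiliary polynomials); it also makes transparent your nice observation that independence depends only on the multiset $\{s_i\}$. The paper's polynomial framework, on the other hand, is what carries the generalization to arbitrary $L$ in Theorem~\ref{main}: for $|L|>1$ the Gram matrix no longer has such a clean closed form, whereas the polynomials $p_{F,L}$ of degree $|L|$ and the triangular argument still work. (Your method does extend verbatim to $L=\{s\}$ for any $s\ge 1$, since every member of an $\{s\}$-close Sperner family must have size at least $s$, so $t_i=s_i-s\ge 0$ and the same factorization applies.)
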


In this short note, we reprove the extremal part of Theorem \ref{regi} via a different linear algebraic approach and generalize the result. For a subset $L$ of $[n]$,  we say that a set system $\cF$ is \textit{$L$-close Sperner} if every pair $F,G\in \cF$ satisfies
$\min\{|F\setminus G|,|G\setminus F|\}\in L$. Our first result is the following.

\begin{theorem}\label{main}
If the set system $\{F_1,F_2\dots,F_m\}\subseteq 2^{[n]}$ is $L$-close Sperner for some $L\subseteq [n]$, then we have $m\le \sum_{h=0}^{|L|}\binom{n}{h}$. Furthermore, if $|L|=1$, then $m\le n$ holds.
\end{theorem}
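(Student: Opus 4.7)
Both parts of Theorem~\ref{main} share a common first step: after relabelling so that $|F_{1}|\le|F_{2}|\le\dots\le|F_{m}|$, one has $|F_{i}\setminus F_{j}|\le|F_{j}\setminus F_{i}|$ whenever $i<j$, so $sd(F_{i},F_{j})=|F_{i}\setminus F_{j}|$ belongs to $L$ by hypothesis. For the general bound I would apply the polynomial method with the polynomials
\[
Q_{i}(x)\;=\;\prod_{\ell\in L}\Bigl(|F_{i}|-\ell-\sum_{k\in F_{i}}x_{k}\Bigr),
\]
each of which, after reducing $x_{k}^{2}=x_{k}$, is a multilinear polynomial of degree at most $|L|$. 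The evaluation at $v_{F_{j}}$ equals $\prod_{\ell\in L}(|F_{i}\setminus F_{j}|-\ell)$, which vanishes whenever $i<j$ (one factor is zero) and equals $\prod_{\ell\in L}(-\ell)\ne 0$ when $i=j$ (using $0\notin L$, which is guaranteed since $L\subseteq[n]$). The evaluation matrix is therefore triangular with nonzero diagonal, forcing the $Q_{i}$ to be linearly independent in the $\sum_{h=0}^{|L|}\binom{n}{h}$-dimensional space of multilinear polynomials of degree at most $|L|$.

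For the sharper bound $m\le n$ when $L=\{\ell\}$, the polynomial argument above only gives $m\le n+1$, so I would instead prove that the characteristic vectors $v_{F_{1}},\dots,v_{F_{m}}$ themselves are linearly independent. Assume $m\ge 2$ (the case $m=1$ being trivial) and form the Gram matrix $B$ of characteristic vectors, $B_{ij}=|F_{i}\cap F_{j}|$. The ordering observation forces, for $i\ne j$, $B_{ij}=|F_{\min(i,j)}|-\ell$, while $B_{ii}=|F_{i}|$. Setting $b_{i}:=|F_{i}|-\ell\ge 0$ (non-negativity: if $|F_{i}|<\ell$, then for any other $F_{j}$ we would have $sd(F_{i},F_{j})\le|F_{i}|<\ell$, contradicting $L=\{\ell\}$), a direct check gives
\[
B\;=\;C+\ell I,\qquad C_{ij}=\min(b_{i},b_{j}).
\]

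The step I expect to be the main obstacle is recognising that the minimum-kernel matrix $C$ is positive semidefinite. This follows from the identity
\[
\min(b_{i},b_{j})\;=\;\int_{0}^{\infty}\mathbf{1}_{[0,b_{i}]}(t)\,\mathbf{1}_{[0,b_{j}]}(t)\,dt,
\]
which exhibits $C$ as the Gram matrix of the indicator functions $\mathbf{1}_{[0,b_{i}]}$ in $L^{2}(\mathbb{R}_{\ge 0})$. Adding the strictly positive definite $\ell I$ (valid since $\ell\ge 1$) then makes $B$ strictly positive definite and in particular invertible. Since $B=AA^{T}$ where $A$ is the $m\times n$ matrix of characteristic vectors, invertibility of $B$ forces $\mathrm{rank}(A)=m$, and therefore $m\le n$. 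Once the decomposition $B=C+\ell I$ and the positive semidefiniteness of the minimum-kernel are in hand, the conclusion is a one-line consequence.
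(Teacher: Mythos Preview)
Your argument for the general bound is essentially the paper's own: the same polynomials $Q_i(x)=\prod_{\ell\in L}(|F_i|-\ell-v_{F_i}\cdot x)$, the same multilinear reduction, and the same triangular evaluation (the paper orders by non-increasing size, you by non-decreasing, which only flips the triangle).

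For the case $|L|=1$, however, your route is genuinely different from the paper's. The paper stays with the polynomials and applies the Blokhuis trick: it argues that the constant polynomial $\mathbf{1}$ cannot lie in the span of the $p_{F_i,\{s\}}$, by assuming $\mathbf{1}=\sum c_{F_i}p_{F_i,\{s\}}$, showing inductively (over the sizes $|F_i|$) that every coefficient $c_{F_i}$ is negative, and then observing that the linear part of the right-hand side cannot cancel. This yields $m+1$ independent vectors in the $(n+1)$-dimensional space $M_1$, hence $m\le n$. You instead go back to the characteristic vectors themselves and show their Gram matrix $B$ is positive definite via the decomposition $B=C+\ell I$ with $C_{ij}=\min(b_i,b_j)$ and the min-kernel representation $\min(b_i,b_j)=\langle \mathbf 1_{[0,b_i]},\mathbf 1_{[0,b_j]}\rangle_{L^2}$. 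This is correct (your check that $b_i\ge 0$ when $m\ge 2$ is exactly what is needed), and it has the pleasant side effect of extending Boros--Gurvich--Milani\v c's Theorem~\ref{regi} from $L=\{1\}$ to arbitrary singletons $L=\{\ell\}$: the $v_{F_i}$ themselves are linearly independent, not merely the associated polynomials. The paper's approach, on the other hand, keeps the whole proof inside a single polynomial framework and illustrates the standard Blokhuis technique, which is the natural tool if one wanted to push further toward $|L|\ge 2$.
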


Note that if $|L|$ is fixed and $n$ tends to infinity, then the bound is asymptotically sharp as shown by $L=\{1,2,\dots,k\}$ (i.e. the $k$-close Sperner property) and the set system $\binom{[n]}{k}=\{F\subseteq [n]:|F|=k\}$. Observe also that the inequality $m\le n$ is sharp for $L=\{1\}$ as shown by the family of singletons, but there exist many other $1$-close Sperner systems with $n$ sets. Furthermore, if $L=\{q\}$ for some prime power $q$ and $n=q^2+q+1$, then the lines of a projective plane of order $q$ form an $L$-close family of size $n$, so the bound $m\le n$ is sharp in this case, too.

Apart from Sperner-type theorems, the other much studied area in extremal finite set theory are intersection properties (see e.g. Chapter 2 of \cite{GP}). For a set $L$ of integers, a set system $\cF$ is said to be $L$-intersecting if for any pair $F,F'$ of distinct sets in $\cF$ we have $|F\cap F'|\in L$. Frankl and Wilson \cite{FW} proved the same upper bound $\sum_{h=0}^{|L|}\binom{n}{h}$ on the size of $L$-intersecting set systems. Frankl and Wilson used higher incidence matrices to prove their result, but later the polynomial method (see \cite{BF} and \cite{ABS}) turned out to be very effective in obtaining $L$-intersection theorems. In the proof of the moreover part of Theorem \ref{main}, an additional idea due to Blokhuis \cite{B} will be used.

We will need the following well-known lemma, we include the proof for sake of completeness. For any field $\mathbb{F}$, we denote by $\mathbb{F}^n[x]$ the vector space over $\mathbb{F}$ of polynomials of $n$ variables with coefficients from $\mathbb{F}$.

\begin{lemma}
\label{indep_poly} Let $p_1(x),p_2(x),\dots,p_m(x)\in \mathbb{F}^n[x]$ be polynomials and $v_1,v_2,\dots, v_m\in \mathbb{F}^n$ be vectors such that $p_i(v_i)\neq 0$ and $p_i(v_j)=0$ holds for all $1\le j<i\le m$. Then the polynomials are linearly independent.
\end{lemma}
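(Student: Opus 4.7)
The plan is to prove linear independence by the standard triangular-evaluation argument: assume a vanishing linear combination $\sum_{i=1}^m c_i p_i(x) = 0$ and show the coefficients $c_i$ must all be zero by evaluating at $v_1, v_2, \ldots, v_m$ in order.

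More precisely, I would argue by induction on $k$ that $c_k = 0$ for all $1 \le k \le m$. For the base case $k = 1$, substitute $x = v_1$ into the identity $\sum_{i=1}^m c_i p_i(x) = 0$. By hypothesis, $p_i(v_1) = 0$ for every $i$ with $1 < i$, so all terms with $i \ge 2$ drop out, leaving $c_1 p_1(v_1) = 0$. Since $p_1(v_1) \neq 0$, we get $c_1 = 0$.

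For the inductive step, suppose $c_1 = \cdots = c_{k-1} = 0$ have been established, and substitute $x = v_k$ into $\sum_{i=1}^m c_i p_i(x) = 0$. The hypothesis $p_i(v_j) = 0$ whenever $j < i$ now kills every term with $i > k$ (taking $j = k$), and the inductive hypothesis kills every term with $i < k$. What remains is $c_k p_k(v_k) = 0$, and since $p_k(v_k) \neq 0$, we conclude $c_k = 0$. This completes the induction and shows $p_1, \ldots, p_m$ are linearly independent over $\mathbb{F}$.

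There is essentially no obstacle here: the only thing to be careful about is reading the indexing condition correctly ($p_i$ vanishes on all earlier $v_j$) and matching it with the evaluation order. The lemma is a routine triangular-system observation, but worth stating separately because it is the workhorse behind polynomial-method arguments of the Frankl–Wilson / Blokhuis type, which will be invoked in the proof of Theorem~\ref{main}.
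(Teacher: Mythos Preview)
Your proof is correct and follows exactly the same triangular-evaluation argument as the paper: assume a vanishing linear combination, substitute $v_1, v_2, \ldots$ in order, and use induction together with the hypotheses to peel off the coefficients one at a time. There is no substantive difference between your write-up and the paper's.
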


\begin{proof}
Suppose that $\sum_{i=1}^mc_ip_i(x)=0$. As $p_i(v_1)=0$ for all $1<i$ we obtain $c_1p_1(v_1)=0$ and therefore $c_1=0$ holds. We proceed by induction on $j$. If $c_h=0$ holds for all $h<j$, then using this and $p_i(v_j)=0$ for all $i>j$, we obtain $c_jp_j(v_j)= 0$ and therefore $c_j=0$.
\end{proof}

Results on $L$-intersecting families had some geometric consequences on point sets in $\mathbb{R}^n$ defining only a few distances, in particular on set systems $\cF$ with only a few Hamming distance. The \textit{skew distance} $sd(F,G):=\min\{|F\setminus G|,|G\setminus F|\}$ does not define a metric space on $2^{[n]}$ as $sd(F,G)=0$ holds if and only if $F\subseteq G$ or $G\subseteq F$ and one can easily find triples for which the triangle inequality fails: if $A$ is the set of even integers in $[n]$, $C$ is the set of odd integers in $[n]$, and $B=\{1,2\}$, then $\lfloor n/2\rfloor=sd(A,C)\not\le sd(A,B)+sd(B,C)=1+1$

One can also investigate the case when $L$ includes 0. Then set systems with the required property are not necessarily Sperner, so we will say that $\cF$ is $L-$skew distance (or $L$-sd for short) if $sd(A,B)\in L$ for all pairs of distinct sets $A,B\in \cF$. We will write $ex_{sd}(n,L)$ to denote the largest size of an $L$-skew distance system $\cF\subseteq 2^{[n]}$. Observe that $ex_{sd}(n,\{0\})$ asks for the maximum size of a chain in $2^{[n]}$ which is obviously $n+1$. This shows that the moreover part of Theorem \ref{main} does not remain valid in this case. In a different context Frankl, F\"uredi, and Pach considered the case $L=\{0,1,\dots,t\}$. They considered the following construction: let $\emptyset=C_0\subsetneq C_1 \subsetneq C_2 \subsetneq \dots \subsetneq C_n=[n]$ be a maximal chain and let
$$\cF_{n,t}=\{F: C_{|F|-t}\subset F\}\cup \{F: |F|\le t ~\text{or}\ |F|\ge n-t\}.$$
The size of $\cF_{n,t}$ is $\binom{n}{t+1}-\binom{2t+1}{t+1}+2\sum_{i=0}^t\binom{n}{i}$ and clearly $\cF_{n,t}$ is $\{0,1,\dots,t\}$-sd. This gives the lower bounds in the following results.

\begin{theorem}[Frankl, F\"uredi, Pach, \cite{FFP}]\label{induction}
If $n\ge 3$, we have $ex_{sd}(n,\{0,1\})=\binom{n}{2}+2n-1$.
\end{theorem}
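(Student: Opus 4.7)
The upper bound I would prove by induction on $n$. The base case $n=3$ is immediate: the entire power set $2^{[3]}$ is $\{0,1\}$-sd (any two distinct subsets of $[3]$ have skew distance at most $\lfloor 3/2\rfloor = 1$), and $|2^{[3]}| = 8 = \binom{3}{2}+2\cdot 3-1$.

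For the inductive step, let $\cF\subseteq 2^{[n]}$ be $\{0,1\}$-sd. For an element $x\in[n]$ to be chosen at the end, partition $\cF = \cF_x\sqcup\cF_{\bar x}$ by whether $x\in F$, and set $\cF_x' := \{F\setminus\{x\}:F\in\cF_x\}$. The first step is to verify that $\cG := \cF_x'\cup\cF_{\bar x}\subseteq 2^{[n]\setminus\{x\}}$ is $\{0,1\}$-sd. Pairs inside $\cF_x'$ or inside $\cF_{\bar x}$ inherit the property from $\cF$; for a cross pair $F\in\cF_{\bar x}$ and $G=G'\cup\{x\}\in\cF_x$, using $x\notin F$ one has
\[
sd(F,G) = \min\bigl(|F\setminus G'|,\,|G'\setminus F|+1\bigr),
\]
so $sd(F,G)\in\{0,1\}$ forces $|F\setminus G'|\le 1$ or $G'\subseteq F$, and in either case $sd(F,G')\in\{0,1\}$. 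By the inductive hypothesis, $|\cG|\le \binom{n-1}{2}+2(n-1)-1$. Since $|\cF|=|\cF_x'|+|\cF_{\bar x}|=|\cG|+|\cD_x|$, where $\cD_x:=\cF_x'\cap\cF_{\bar x}$ is the collection of $F\subseteq[n]\setminus\{x\}$ with both $F$ and $F\cup\{x\}$ in $\cF$, the induction closes as soon as some $x$ is chosen with $|\cD_x|\le n+1$.

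The main obstacle is exactly this last bound. A short analysis of all four skew-distance conditions among $F, G, F\cup\{x\}, G\cup\{x\}\in\cF$ (for distinct $F,G\in\cD_x$) shows that any two distinct members of $\cD_x$ are either comparable by inclusion or differ by a single-element swap (same cardinality, symmetric difference of size $2$). This ``comparable-or-swap'' property alone yields only $|\cD_x|\le n+2$, as witnessed in $2^{[n-1]}$ by
$\{\emptyset, \{1\}, \{2\}, \{1,2\}, \{1,2,3\},\ldots,\{1,2,n-1\}, [n-1]\}$
for $n\ge 5$, so it does not immediately deliver the required $n+1$. To save the remaining unit I plan to average over $x$: the sum $\sum_{x\in[n]} |\cD_x|$ is precisely the number of covering pairs in the Hasse diagram of $\cF$, and the goal is to show this is at most $n(n+1)$. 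This value is attained by $\cF_{n,1}$, for which in fact each $|\cD_x|$ equals $n+1$. Establishing the inequality $\#(\text{covering pairs})\le n(n+1)$ is the technical heart of the proof; I expect it to follow by exploiting the comparable-or-swap structure of each $\cD_x$ together with the ambient $\{0,1\}$-sd property of $\cF$, essentially arguing that any $\cD_y$ of size $n+2$ forces enough extra covers elsewhere to inflate the sum. Once this is in hand, pigeonhole produces some $x$ with $|\cD_x|\le n+1$, and the induction closes:
\[
|\cF|\le \binom{n-1}{2}+2(n-1)-1+(n+1)=\binom{n}{2}+2n-1.
\]
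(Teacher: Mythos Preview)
Your inductive framework is exactly the paper's: remove an element $x$, set $\cG=\{F\setminus\{x\}:F\in\cF\}$ and $\cH=\cD_x=\{H:H,H\cup\{x\}\in\cF\}$, use $|\cF|=|\cG|+|\cH|$, apply induction to $\cG$, and aim for $|\cH|\le n+1$. Your verification that $\cG$ is $\{0,1\}$-sd and your ``comparable-or-swap'' analysis of $\cD_x$ are both correct.

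The gap is precisely where you say it is. You have not proved $\sum_{x}|\cD_x|\le n(n+1)$; you only state that you ``expect it to follow'' from the comparable-or-swap structure plus some unspecified overflow argument. As you yourself note, the internal structure of a single $\cD_x$ permits size $n+2$, so something global is genuinely needed, and nothing in your write-up supplies it. Without this bound the induction does not close.

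The paper does \emph{not} average. Instead it chooses $x$ at the outset using the level structure of $\cF$. For each nonempty uniform level $\cF_i=\{F\in\cF:|F|=i\}$ it defines a representative set $C_i$ (when $|\cF_i|\ge 3$, either a common $(i-1)$-element core or a common $(i+1)$-element hull; otherwise an arbitrary member), shows that $|C_i\setminus C_j|\le 1$ whenever $i<j$, and hence finds an element $x$ that never leaves the representatives as one goes up the levels: there is no pair $i<j$ with $x\in C_i$, $x\notin C_j$. For this particular $x$ one can then prove (i) distinct-size members of $\cH$ are nested (your Claim~\ref{issubset} analogue), and (ii) at most one level of $\cH$ can have two or more sets --- the special property of $x$ is used exactly here. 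A short case analysis on the type ($\vee$ or $\wedge$) of that exceptional level then gives $|\cH|\le n+1$ directly. This replaces your missing averaging step with a constructive choice of $x$.
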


\begin{theorem}[Frankl, F\"uredi, Pach, \cite{FFP}]\label{ffp}
For any $n,t$ with $n\ge 2(t+2)$, we have \\ $\binom{n}{t+1}-\binom{2t+1}{t+1}+2\sum_{i=0}^t\binom{n}{i} \le ex_{sd}(n,\{0,1,\dots,t\})<\binom{n}{t+1}+5(t+1)^2\binom{n}{t}$.
\end{theorem}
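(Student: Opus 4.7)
The plan is to prove the two bounds separately. For the lower bound, I will verify that the construction $\cF_{n,t}$ is $\{0,1,\ldots,t\}$-sd by a case analysis on pairs $A,B$: for $|A|\le|B|$, if $|A|\le t$ then $sd(A,B)\le|A|\le t$; if $|B|\ge n-t$ then $sd(A,B)\le n-|B|\le t$; otherwise $A\supset C_{|A|-t}$ and $B\supset C_{|B|-t}$ with $C_{|A|-t}\subset C_{|B|-t}\subset B$, so $A\setminus B\subset A\setminus C_{|A|-t}$, of size exactly $t$. The size count follows from $\sum_{k=t+1}^{n-t-1}\binom{n-k+t}{t}=\sum_{m=2t+1}^{n-1}\binom{m}{t}=\binom{n}{t+1}-\binom{2t+1}{t+1}$ by the hockey-stick identity, plus $2\sum_{i=0}^t\binom{n}{i}$ for the outer pieces.

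For the upper bound, I will partition $\cF=\cF_S\cup\cF_M\cup\cF_L$ with $\cF_S=\{A\in\cF:|A|\le t\}$, $\cF_L=\{A\in\cF:|A|\ge n-t\}$, and $\cF_M$ the middle part. Trivially $|\cF_S|+|\cF_L|\le 2\sum_{i=0}^t\binom{n}{i}\le 2(t+1)\binom{n}{t}$, well within the error term of the stated bound.

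The main work will be to bound $\cF_M$ layer-by-layer. For each $k\in[t+1,n-t-1]$ and every pair $A,B\in\cF_M^k:=\cF_M\cap\binom{[n]}{k}$, the condition $sd(A,B)\le t$ translates to $|A\cap B|\ge k-t$, so $\cF_M^k$ is a $(k-t)$-intersecting $k$-uniform family. By the Ahlswede--Khachatrian theorem (or Frankl's earlier result in the appropriate regime), whenever $n$ is large enough relative to $k,t$ one has $|\cF_M^k|\le\binom{n-k+t}{t}$, extremised by the star on a fixed $(k-t)$-subset; summing yields the main term $\binom{n}{t+1}-\binom{2t+1}{t+1}$ again by hockey-stick.

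The hard part will be handling layers $k$ (close to $n-t$) for which the hypothesis of the intersection theorem fails in the given range $n\ge 2(t+2)$. I plan to deal with these by (a) passing to complements, since the family $\{[n]\setminus A:A\in\cF_M^k\}$ is $(n-k)$-uniform with the same skew-distance property and hence satisfies the dual bound $|\cF_M^k|\le\binom{k+t}{t}$ in the complementary range of $k$; and (b) where both the direct and dual bounds fail, invoking the Frankl--Wilson $L$-intersecting bound $|\cF_M^k|\le\binom{n}{t}$ with $L=\{k-t,\ldots,k-1\}$ of size $t$. Careful bookkeeping should show the residual contributions sum to at most $O(t^2)\binom{n}{t}$, comfortably within the $5(t+1)^2\binom{n}{t}$ slack of the stated bound.
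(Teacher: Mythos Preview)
First, note that the paper does \emph{not} give its own proof of this theorem: it merely states it with a citation to \cite{FFP}, records the construction $\cF_{n,t}$ that witnesses the lower bound, and then proves only the special case $t=1$ (Theorem~\ref{induction}) by a separate inductive argument. So there is no proof in the paper to compare your upper-bound argument against; your lower-bound verification, on the other hand, matches exactly the construction the paper describes, and your case analysis and hockey-stick count are correct.

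Your upper-bound strategy, however, has a genuine gap: bounding each uniform layer $\cF_M^k$ using only the within-layer intersection condition cannot recover the leading coefficient $1$ in front of $\binom{n}{t+1}$. Concretely, for a $(k-t)$-intersecting $k$-uniform family the sharp Ahlswede--Khachatrian maximum is $\max\bigl(\binom{n-k+t}{t},\binom{k+t}{t},\ldots\bigr)$, and already for $t=1$ this equals $\max(n-k+1,k+1)$; summing over $2\le k\le n-2$ gives roughly $\tfrac{3}{4}n^2\sim\tfrac{3}{2}\binom{n}{2}$, not $\binom{n}{2}+O(n)$. For $t\ge 2$ your plan is worse still: the region where neither the direct bound $\binom{n-k+t}{t}$ nor the complementary bound $\binom{k+t}{t}$ is valid has length $\sim n\cdot\frac{t-1}{t+1}$, and applying the Ray-Chaudhuri--Wilson bound $\binom{n}{t}$ on each of those $\Theta(n)$ layers contributes a term of order $n\binom{n}{t}\sim (t+1)\binom{n}{t+1}$, far exceeding the stated error $5(t+1)^2\binom{n}{t}$. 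Thus your ``careful bookkeeping'' cannot close: the per-layer extremal numbers, summed, genuinely exceed $\binom{n}{t+1}$ by a constant factor. The Frankl--F\"uredi--Pach argument must (and does) exploit the cross-layer skew-distance constraints --- in effect forcing the ``centres'' of the different layers to be nested --- which your decomposition discards.
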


The authors of \cite{FFP} conjectured that the lower bound is tight in Theorem \ref{ffp} for large enough $n$. (There are larger constructions for small $n$.) We will give a simple, new proof of Theorem \ref{induction} that proceeds by induction.
 
\section{Proof and remarks}
We start by introducing some notation. For two vectors, $u,v$ of length $n$ we denote their scalar product $\sum_{i=1}^nu_iv_i$ by $u\cdot v$. We will often use the fact that for any pair $F,G$ of sets we have $v_F\cdot v_G=|F\cap G|$. We will also use that $\min\{|F\setminus G|,|G\setminus F|\}=|F\setminus G|$ if and only if $|F|\le |G|$ holds.

For two sets $F,L\subseteq [n]$ we define the polynomial $p'_{F,L}\in\mathbb{R}^n[x]$ as 
$$p'_{F,L}(x)=\prod_{h\in L}(|F|-v_F\cdot x-h).$$
We obtain $p_{F,L}(x)$ from $p'_{F,L}(x)$ by replacing every $x_i^t$ term by $x_i$ for every $t\ge 2$ and $i=1,2,\dots,n$. As $0=0^t$ and $1=1^t$ for any $t\ge 2$, we have $p_{F,L}(v_G)=p'_{F,L}(v_G)=\prod_{h\in L}(|F\setminus G|-h)$. Finally, observe that the polynomials $p_{F,L}(x)$ all belong to the subspace $M_{|L |}$ of $\mathbb{R}^n[x]$ spanned by $\{x_{i_1}x_{i_2}\dots x_{i_l}:0\le l\le |L|,i_1<i_2<\dots<i_l\}$, where $l=0$ refers to the constant 1 polynomial $\mathbf{1}$. Note that $\dim(M_{|L|})=\sum_{i=0}^{|L|}\binom{n}{i}$.

Based on the above, Theorem \ref{main} is an immediate consequence of the next result.

\begin{theorem}
If the set system $\{F_1,F_2\dots,F_m\}\subseteq 2^{[n]}$ is $L$-close Sperner, then the polynomials $p_{F_1,L}(x),p_{F_2,L}(x),\dots,p_{F_m,L}(x)$ are linearly independent in $\mathbb{R}^n[x]$. In particular, $m\le \sum_{h=0}^{|L|}\binom{n}{h}$. Moreover, if $|L|=1$ and $\{F_1,F_2\dots,F_m\}\neq \{\emptyset\}$, then the polynomials $p_{F_1,L}(x),p_{F_2,L}(x),\dots,p_{F_m,L}(x)$ are linearly independent in $\mathbb{R}^n[x]$ even together with $\mathbf{1}$. In particular, $m\le n$.
\end{theorem}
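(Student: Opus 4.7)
The plan is to strengthen the first part by adjoining the constant polynomial $\mathbf{1}$ and showing that $\mathbf{1}, p_{F_1,L},\dots,p_{F_m,L}$ are linearly independent in the $(n+1)$-dimensional space $M_1$, which immediately yields $m\le n$. Assume a relation $c_0\mathbf{1}+\sum_{i=1}^m c_i\,p_{F_i,L}(x)=0$ in $\mathbb{R}^n[x]$. Since $p_{F_i,L}(x)=|F_i|-v_{F_i}\cdot x-\ell$, matching the coefficient of $x_j$ gives $\sum_i c_i v_{F_i}=0$ in $\mathbb{R}^n$, and the constant term gives $c_0+\sum_i c_i(|F_i|-\ell)=0$; summing coordinates of the vector identity yields $\sum_i c_i|F_i|=0$, so $c_0=\ell\sum_i c_i$. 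Thus it suffices to show the characteristic vectors $v_{F_1},\dots,v_{F_m}$ are linearly independent in $\mathbb{R}^n$, for then $c_1=\cdots=c_m=0$ and hence $c_0=0$ as well.

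For this I prove that the Gram matrix $G=(v_{F_i}\cdot v_{F_j})_{i,j}=(|F_i\cap F_j|)_{i,j}$ is positive definite. The trivial case $m\le 1$ gives $m\le n$ directly, so assume $m\ge 2$; the $\{\ell\}$-close Sperner hypothesis with $\ell\ge 1$ then forces $\emptyset\notin\cF$ and in fact $|F_i|\ge\ell$ for every $i$ (otherwise $sd(F_i,F_j)\le|F_i|<\ell$ for any other $j$). Reorder so that $k_i:=|F_i|$ satisfies $k_1\ge k_2\ge\cdots\ge k_m\ge\ell$; for $i\ne j$, the $\{\ell\}$-close Sperner property together with $\min(k_i,k_j)=k_{\max(i,j)}$ gives $|F_i\cap F_j|=k_{\max(i,j)}-\ell$, so $G_{ii}=k_i$ and $G_{ij}=k_{\max(i,j)}-\ell$ off the diagonal.

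The decisive computation is that $x^T G x>0$ for every $0\ne x\in\mathbb{R}^m$. Writing $K_{ij}:=k_{\max(i,j)}$ and $S_t:=x_1+\cdots+x_t$, the telescoping identity $k_{\max(i,j)}=\sum_{t=\max(i,j)}^m(k_t-k_{t+1})$ with $k_{m+1}:=0$ yields $x^T K x=\sum_{t=1}^m(k_t-k_{t+1})S_t^2$, and combining with $\sum_{i\ne j}x_i x_j=S_m^2-\sum_i x_i^2$ produces
$$x^T G x=\sum_{t=1}^{m-1}(k_t-k_{t+1})\,S_t^2+(k_m-\ell)\,S_m^2+\ell\sum_{i=1}^m x_i^2.$$
Each term is non-negative (by the chosen ordering, by $k_m\ge\ell$, and by $\ell\ge 1$), and the final summand is strictly positive for $x\ne 0$; thus $G$ is positive definite, $c^T G c=\|\sum_i c_i v_{F_i}\|^2=0$ forces $c=0$, and the argument closes. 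The main obstacle is this positive-definiteness verification: the telescoping rewriting of $x^T K x$ via the partial sums $S_t$, together with the off-diagonal $-\ell$ correction and the bound $k_m\ge\ell$ coming from the $\{\ell\}$-close Sperner hypothesis, is the non-trivial ingredient.
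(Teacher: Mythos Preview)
Your argument for the ``moreover'' part (the case $|L|=1$) is correct, but it takes a genuinely different route from the paper. You reduce the independence of $\mathbf{1},p_{F_1,L},\dots,p_{F_m,L}$ to the linear independence of the characteristic vectors $v_{F_1},\dots,v_{F_m}$, and you establish the latter by an explicit positive-definiteness computation for the Gram matrix $(|F_i\cap F_j|)_{i,j}$, exploiting the structural identity $|F_i\cap F_j|=k_{\max(i,j)}-\ell$ and a telescoping rewrite in the partial sums $S_t$. The paper, by contrast, never touches the Gram matrix: it assumes $\mathbf{1}=\sum_F c_F\,p_{F,L}$, evaluates at the vectors $v_F$, and shows by downward induction on $|F|$ that every coefficient $c_F$ depends only on $|F|$ and is negative; this contradicts the fact that the linear part of the right-hand side must vanish. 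Your approach has the pleasant side effect of reproving Theorem~\ref{regi} (linear independence of the $v_{F_i}$ over $\mathbb{R}$) directly; the paper's approach stays entirely within the polynomial framework and avoids the quadratic-form calculation.

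Two small points. First, your proposal does not address the first assertion for general $L$ (independence of the $p_{F_i,L}$ and the bound $m\le\sum_{h\le|L|}\binom{n}{h}$); that half is the routine triangular-system argument via Lemma~\ref{indep_poly}, but as written your proof covers only $|L|=1$. Second, when you dismiss $m\le 1$ as giving $m\le n$ ``directly,'' note that the theorem also asserts independence of $\mathbf{1}$ and $p_{F_1,L}$ in that case; this does use the hypothesis $\cF\neq\{\emptyset\}$, since for $F_1=\emptyset$ one has $p_{\emptyset,\{\ell\}}=-\ell\cdot\mathbf{1}$. For $F_1\neq\emptyset$ the polynomial has a nonzero linear part, so the claim is immediate.
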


\begin{proof}
We claim that if $F_1,F_2,\dots,F_m$ are listed in a non-increasing order according to the sizes of the sets, then the polynomials $p_{F_1,L}(x),p_{F_2,L}(x),\dots,p_{F_m,L}(x)$ and the characteristic vectors $v_{F_1},v_{F_2},\dots, v_{F_m}$ satisfy the conditions of Lemma \ref{indep_poly}. Indeed, for any $G\subseteq [n]$ we have $p_{F,k}(G)=\prod_{h\in L}(|F|-|F\cap G|-h)=\prod_{h\in L}(|F\setminus G|-h)$. Therefore $p_{F,L}(v_{F})\neq 0$ holds for any $F\subseteq [n]$, while if $|F_j|\le |F_i|$, then the $L$-close Sperner property ensures $|F_i\setminus F_j|\in L$ and thus $p_{F_j,L}(v_{F_i})=0$.

To prove the moreover part, let $L=\{s\}$, $\cF=\{F_1,F_2,\dots,F_m\}$ and let us suppose towards a contradiction that $\mathbf{1}=\sum_{i=1}^mc_{F_i}p_{F_i,L}(x)$ holds for some reals $c_{F_i}$. We claim that if $|F_i|=|F_j|$, then $c_{F_i}=c_{F_j}$ holds and all coefficients are negative. Observe that for any $F\in\cF$ using the $L$-close Sperner property we have 
\begin{equation}\label{eq}
    1=c_Fp_{F,L}(v_F)+\sum_{\substack{F'\in\cF \\ |F'|>|F|}}c_{F'}p_{F',L}(v_F),
\end{equation}
and $p_{F,L}(v_F)=-s$ for all $F$.
In particular, if $F$ is of maximum size in $\cF$, then $c_F=-\frac{1}{s}$ holds. Let $m_j$ denote $|\{F\in \cF:|F|=j\}|$ and $c_j$ denote the value of $c_F$ for all $F\in \cF$ of size $j$ - once this is proved. By the above, if $j^*$ is the maximum size among sets in $\cF$, then $c_{j^*}$ exists. Suppose that for some $i$ we have proved the existence of $c_j$ for all $j$ with $i<j\le j^*$. If there is no set in $\cF$ of size $i$, there is nothing to prove. If $|F|=i$, then using (\ref{eq}) and the fact $p_{F',L}(v_F)=|F'|-|F|+s-s=|F'|-|F|$ provided $|F'|\ge |F|$, we obtain
\begin{equation}\label{eq2}
    1=c_Fp_{F,L}(v_F)+\sum_{\substack{F'\in\cF \\ |F'|>|F|}}c_{F'}p_{F',L}(v_F)=-sc_F+\sum_{j>i}c_jm_j(j-i).
\end{equation}
This shows that $c_F$ does not depend on $F$ only on $|F|$ as claimed. Moreover, as $s$, $m_j$, $j-i$ are all non-negative and, by induction, all $c_j$ are negative, then in order to satisfy (\ref{eq2}), we must have that $c_i$ is negative as well. So we proved that all $c_j$'s are negative. But this contradicts $\mathbf{1}=\sum_{i=1}^mc_{F_i}p_{F_i,L}(x)$, as on the right hand side all coefficients of the variables are positive, so they cannot cancel. (If there are variables. This is where the condition $\{F_1,F_2\dots,F_m\}\neq \{\emptyset\}$ is used.)
\end{proof}

Using the original "push-to-the-middle" argument of Sperner, it is not hard to prove that for any $k$-close Sperner system $\cF\subseteq 2^{[n]}$, there exists another one $\cF'\subseteq 2^{[n]}$ with $|\cF|=|\cF'|$ and $\cF'$ containing sets of size between $k$ and $n-k$. Is it true that for such set systems we have $\langle p_{F,[k]}: ~F\in \cF'\rangle \cap M_{k-1}=\{\mathbf{0}\}$? This would imply $ex_{sd}(n,[k])=\binom{n}{k}$. 

\bigskip

Let us now turn to the proof of Theorem \ref{induction}. 

\begin{proof}[Proof of Theorem \ref{induction}]
The lower bound is given by the special case $t=1$ of the construction given above Theorem \ref{induction}. It remains to prove the upper bound.

We will prove that a $\{0,1\}$-sd system $\cF\subseteq 2^{[n]}$ is of size at most $\binom{n}{2}+2n-1$ by induction on $n$. Since $\binom{3}{2}+2\cdot 3-1=2^3$, the statement is trivially true for $n=3$. Now assume that $n\ge 4$ and we have already proved the statement for $n-1$.

Consider the uniform systems $\cF_i=\{F\in \cF:|F|=i\}$ that are 1-close Sperner. We will define a representative set $C_i$ for all nonempty levels. If $|\cF_i|\ge 3$, it is an exercise for the reader (see Lemma 19 in \cite{BGMb}) to see that there exists a set $C_i$ either with $|C_i|=i-1$ and $C_i \subseteq \cap_{F\in \cF_i}F$ or with $|C_i|=i+1$ and $\cup_{F \in \cF_i}F\subseteq C_i$. In the former case we say that $\cF_i$ is of type $\vee$, in the latter case we say that $\cF_i$ is of type $\wedge$. If $|\cF_i|=2$, then we select one of the two sets to be $C_i$. If $|\cF_i|=1$, then $C_i$ is the only set in $\cF_i$. Finally, if $\cF_i=\emptyset$, then $C_i$ is undefined.

\begin{claim}
If $i<j$ and $|\cF_i|, |\cF_j|>0$ then $|C_i\backslash C_j|\le 1$.
\end{claim}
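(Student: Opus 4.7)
The approach is contradiction: suppose there are two distinct elements $y,y'\in C_i\setminus C_j$, and produce sets $F\in\cF_i$ and $G\in\cF_j$ with $\{y,y'\}\subseteq F$ and $\{y,y'\}\cap G=\emptyset$. Because $|F|=i<j=|G|$, the skew distance equals $|F\setminus G|$ (as noted at the start of the proof section), and the two elements $y,y'$ then witness $sd(F,G)\ge 2$, contradicting the $\{0,1\}$-sd assumption.

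To build such an $F$, I would split on how $C_i$ was chosen. If $|\cF_i|\le 2$ then $C_i\in\cF_i$, so $F:=C_i$ already contains $\{y,y'\}\subseteq C_i$. If $|\cF_i|\ge 3$ and $\cF_i$ is of type $\vee$, every $F\in\cF_i$ contains $C_i\supseteq\{y,y'\}$, so any choice works. If $|\cF_i|\ge 3$ and $\cF_i$ is of type $\wedge$, each $F\in\cF_i$ has the form $C_i\setminus\{v_F\}$ with the $v_F$'s pairwise distinct; since $|\cF_i|\ge 3$ supplies at least three such elements and only two can lie in $\{y,y'\}$, I can pick $F$ with $v_F\notin\{y,y'\}$.

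The construction of $G$ is symmetric. If $|\cF_j|\le 2$ then $G:=C_j$ avoids $y,y'$ because both lie outside $C_j$. If $|\cF_j|\ge 3$ and $\cF_j$ is of type $\wedge$, every $G\subseteq C_j$ automatically misses $\{y,y'\}$. If $|\cF_j|\ge 3$ and $\cF_j$ is of type $\vee$, each $G$ equals $C_j\cup\{z_G\}$ with distinct $z_G\notin C_j$, and since $y,y'\notin C_j$ a set $G$ contains $y$ (resp.\ $y'$) only when $z_G=y$ (resp.\ $z_G=y'$); hence at most two sets of $\cF_j$ meet $\{y,y'\}$, and $|\cF_j|\ge 3$ yields a valid $G$.

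The main obstacle is simply keeping track of the nine type combinations for $(\cF_i,\cF_j)$ without dropping a case. The underlying mechanism is uniform, however: $y,y'\in C_i$ provides enough flexibility in $\cF_i$ to pick an $F$ covering both, while $y,y'\notin C_j$ provides enough flexibility in $\cF_j$ to pick a $G$ avoiding both, after which the skew-distance bound supplies the contradiction for free.
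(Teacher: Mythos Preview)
Your proof is correct and follows essentially the same approach as the paper: assume two elements $a,b\in C_i\setminus C_j$, then use the structure of the representative sets to find $F\in\cF_i$ containing both and $G\in\cF_j$ avoiding both, yielding $sd(F,G)\ge 2$. The paper compresses your nine-case analysis into the single parenthetical ``This is trivial for levels with one or two sets. If there are 3 or more sets then at most two of them can be wrong,'' but the underlying reasoning is identical.
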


\begin{proof}
Assume that there are two different elements $a,b$ such that $a,b\in C_i$ but $a,b\not\in C_j$. It follows from the definition of the representative sets, that there are sets $F_i\in\cF_i$ and $F_j\in\cF_j$ such that $a,b\in F_i$ and $a,b\not\in F_j$. (This is trivial for levels with one or two sets. If there are 3 or more sets then at most two of them can be wrong.)
\end{proof}

Let $C_{p_1}, C_{p_2}, \dots C_{p_t}$ ($p_1<\dots <p_t$) denote the representative sets of the nonempty levels among $\cF_1, \cF_2, \dots \cF_{n-1}$. Since 
$$\left|\bigcup_{i=1}^{t-1} C_{p_i}\backslash C_{p_{i+1}}\right|\le \sum_{i=1}^{t-1} |C_{p_i}\backslash C_{p_{i+1}}|\le t-1\le n-2,$$
there will be an element $x\in[n]$ such that $x\not\in C_{p_i}\backslash C_{p_{i+1}}$ for any $p_i$. This implies that there are no nonempty levels $\cF_i$ and $\cF_j$ such that $i<j$, $x\in C_i$ but $x\not\in C_j$. Rearranging the names of the elements, we may assume that $x=n$.

Now we define two families in $2^{[n-1]}$, let $$\cG=\{F\backslash\{n\}~|~F\in\cF\},~~~\cH=\{H\in 2^{[n-1]}~|~H,H\cup\{n\}\in\cF\}.$$

Note that $|\cF|=|\cG|+|\cH|$. Since $\cG$ is a $\{0,1\}$-sd system in $2^{[n-1]}$, we get an upper bound on its size by induction. We will examine $\cH$ to bound its size as well.

\begin{claim}\label{issubset}
If $A,B\in \cH$ and $|A|<|B|$ then $A\subset B$.
\end{claim}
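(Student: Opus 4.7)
The plan is to exploit the asymmetry between $A$ and $B$ that is created by inserting the element $n$. Since $A, B, A\cup\{n\}, B\cup\{n\}$ are all in $\cF$, the skew distance of any pair of them lies in $\{0,1\}$. The key pair to look at is $A\cup\{n\}$ and $B$: because $n\notin B$ but $n\in A\cup\{n\}$, appending $n$ adds exactly one to $|A\setminus B|$ without affecting $|B\setminus A|$.

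More precisely, I would compute
\[
(A\cup\{n\})\setminus B=(A\setminus B)\cup\{n\},\qquad B\setminus(A\cup\{n\})=B\setminus A,
\]
so the two one-sided differences have sizes $|A\setminus B|+1$ and $|B\setminus A|$. From $|A|<|B|$ we get $|B\setminus A|-|A\setminus B|=|B|-|A|\ge 1$, hence $|B\setminus A|\ge |A\setminus B|+1$. Therefore
\[
sd(A\cup\{n\},B)=\min\{|A\setminus B|+1,\,|B\setminus A|\}=|A\setminus B|+1.
\]
Since $\cF$ is $\{0,1\}$-sd, this value is $0$ or $1$, which forces $|A\setminus B|=0$, i.e.\ $A\subseteq B$. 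Strict containment then follows from $|A|<|B|$.

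There is essentially no obstacle here; the only thing to be careful about is that both of the relevant sets really are in $\cF$, which is precisely the definition of $\cH$ (it requires both $A$ and $A\cup\{n\}$ to lie in $\cF$, and likewise for $B$). No casework on whether $|A|+1=|B|$ or $|A|+1<|B|$ is needed, because in both regimes the $\min$ defining $sd(A\cup\{n\},B)$ is realised by the side containing $n$.
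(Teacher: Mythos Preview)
Your proof is correct and follows the same approach as the paper: compare $A\cup\{n\}\in\cF$ with $B\in\cF$, observe that $n$ contributes to $(A\cup\{n\})\setminus B$ but not to the other side, and use the $\{0,1\}$-sd condition to force $|A\setminus B|=0$. You are in fact more careful than the paper, which jumps straight to $1\ge |(A\cup\{n\})\setminus B|$ without explicitly verifying that this side realises the minimum in the skew distance.
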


\begin{proof}
By the definition of $\cH$, we get that $A\cup \{n\}\in\cF$ and $n\not\in B$.
Since $\cF$ is a $\{0,1\}$-sd system, $1\ge |(A\cup \{n\})\backslash B|=|A\backslash B|+1$. Therefore we have $|A\backslash B|=0$ or equivalently $A\subset B$.
\end{proof}

\begin{claim}\label{atmostone}
There is at most one level in $\cH$ with two or more sets in it.
\end{claim}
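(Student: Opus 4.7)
The plan is to argue by contradiction. Assume two distinct levels $\cH_i$ and $\cH_j$ with $i<j$ each contain at least two sets, and show that this forces the representatives $C_{i+1}$ and $C_j$ of $\cF$ to violate the defining property of $n$, namely that $n\notin C_p\setminus C_q$ for all $p<q$ at which $\cF_p,\cF_q$ are nonempty.

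The first step is to see that the levels must be far apart: $j\ge i+2$. Pick distinct $A_1,A_2\in \cH_i$ and distinct $B_1,B_2\in \cH_j$. By Claim~\ref{issubset} every $A_k$ is contained in every $B_\ell$. Since $A_1\ne A_2$ lie at the same level of a $\{0,1\}$-sd family, $|A_1\triangle A_2|=2$, hence $|A_1\cup A_2|=i+1$; the same argument gives $|B_1\cap B_2|=j-1$. The inclusion $A_1\cup A_2\subseteq B_1\cap B_2$ then forces $j\ge i+2$, so $C_{i+1}$ and $C_j$ are representatives of distinct nonempty levels.

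The main step is the case analysis that yields $n\in C_{i+1}$ and $n\notin C_j$. The system $\cF_{i+1}$ contains $A_1\cup\{n\}$ and $A_2\cup\{n\}$, both containing $n$. If $|\cF_{i+1}|=2$ or $\cF_{i+1}$ is of type $\wedge$, then $C_{i+1}$ is one of $A_k\cup\{n\}$ or contains their union, so $n\in C_{i+1}$. The delicate case is type $\vee$: then $C_{i+1}\subseteq A_k\cup\{n\}$ with $|C_{i+1}|=i=|A_k|$, so if $n\notin C_{i+1}$ we would have $C_{i+1}=A_k$ for both $k=1,2$, contradicting $A_1\ne A_2$. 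A dual case analysis on $\cF_j\supseteq\{B_1,B_2\}$ (neither containing $n$) shows $n\notin C_j$: the $|\cF_j|=2$ and type $\vee$ cases are automatic because $C_j$ is contained in some $B_k$, while in type $\wedge$, writing $B_k=C_j\setminus\{y_k\}$ with $y_1\ne y_2$, the hypothesis $n\in C_j$ would force $n=y_1=y_2$, a contradiction. Combined with $i+1<j$, this contradicts the choice of $n$ and completes the proof. The one obstacle I anticipate is the type $\vee$ subcase for $\cF_{i+1}$, which is the only point where the hypothesis that $\cH_i$ contains two distinct sets (rather than one) is genuinely used; the other subcases amount to reading off the definition of the representative.
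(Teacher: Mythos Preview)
Your proof is correct and follows essentially the same approach as the paper: both argue by contradiction that two multi-set levels in $\cH$ would force $n\in C_{i+1}$ and $n\notin C_j$, contradicting the defining property of $n$. The only minor difference is that you rule out $j=i+1$ in advance via Claim~\ref{issubset} (deducing $j\ge i+2$ from $A_1\cup A_2\subseteq B_1\cap B_2$), whereas the paper simply notes that $j=i+1$ already gives the direct contradiction $n\in C_{i+1}=C_j\not\ni n$; your case analysis on the type of $\cF_{i+1}$ and $\cF_j$ just makes explicit what the paper leaves as ``follows from the definition of the representative sets.''
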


\begin{proof}
Assume that there are two sets of size $i$ and two sets of size $j$ ($i<j$) in $\cH$. Then in $\cF$ there are two sets of size $i+1$ containing $n$ and two sets of size $j$ that do not contain $n$. From the definition of the representative sets follows that $n\in C_{i+1}$ but $n\not\in C_j$. This is an outright contradiction if $i+1=j$. If $i+1<j$, it contradicts the special property of the element $n$ established earlier.
\end{proof}

\begin{claim}
$|\cH|\le n+1$.
\end{claim}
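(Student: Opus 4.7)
The two preceding claims almost completely determine the structure of $\cH$: by Claim \ref{atmostone} at most one ``level'' (the sets of a fixed size) is fat, and by Claim \ref{issubset} members of different levels of $\cH$ are nested. Thus $\cH$ is essentially a chain in $2^{[n-1]}$, possibly enlarged at one level.

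First I would handle the case in which no level of $\cH$ contains two or more sets: then $\cH$ is itself a chain in $2^{[n-1]}$, hence of size at most $n$.

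Otherwise let $k$ be the unique size at which $\cH$ has $r \ge 2$ sets, call them $A_1,\dots,A_r$, and set $I=\bigcap_{i=1}^r A_i$, $U=\bigcup_{i=1}^r A_i$. By Claim \ref{issubset}, every $H\in\cH$ with $|H|<k$ is contained in each $A_i$, so $H\subseteq I$; combined with Claim \ref{atmostone} this gives at most $|I|+1$ such small sets. Symmetrically, the sets of size exceeding $k$ contain $U$ and number at most $n-|U|$. Hence
\[
|\cH|\le (|I|+1)+r+(n-|U|)=n+1+r-(|U|-|I|),
\]
and the desired bound reduces to showing $|U|-|I|\ge r$.

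This will be the main point. Since each pair $A_i,A_j$ lies in $\cF$ and has the same size, the $\{0,1\}$-sd property forces $|A_i\triangle A_j|=2$. For $r=2$ this immediately gives $|I|=k-1$ and $|U|=k+1$, so $|U|-|I|=2=r$. For $r\ge 3$ I would invoke Lemma 19 of \cite{BGMb} (already cited earlier in the proof), which classifies the $A_i$ as either type $\vee$ (sharing a common $(k-1)$-subset) or type $\wedge$ (sitting inside a common $(k+1)$-superset); a short computation then yields $|I|=k-1$, $|U|=k-1+r$ in the first case and $|I|=k+1-r$, $|U|=k+1$ in the second, giving $|U|-|I|=r$ either way. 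The only mildly delicate step is this $r\ge 3$ dichotomy, but that is exactly what \cite{BGMb} supplies, so no new structural work should be needed beyond the two claims just established.
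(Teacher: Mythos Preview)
Your proof is correct and follows essentially the same route as the paper: both handle the no-fat-level case trivially, then use Claim~\ref{issubset} for the nesting, Claim~\ref{atmostone} for uniqueness of the fat level, and the $\vee/\wedge$ structure of that level (via Lemma~19 of \cite{BGMb}) to finish the count. The only cosmetic difference is that you bound the small and large parts symmetrically via $|I|$ and $|U|$ and then verify $|U|-|I|\ge r$, whereas the paper, depending on the type, uses only one of $U$ or $I$ to exhibit $r-2$ empty levels adjacent to the fat one; the arithmetic comes out identical.
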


\begin{proof}
Let $\cH_i=\{H\in \cH:|H|=i\}$ for all $i=0,1,\dots, n-1$. If there is no $i$ such that $|\cH_i|>1$, then $|\cH|\le n$. Assume that $|\cH_t|=k>1$. By Claim \ref{atmostone}, this is the only level with more than one set. If the level $\cH_t$ is of type $\vee$, then the union of its sets is of size $t+k-1$. Claim \ref{issubset} implies that all sets $H\in\cH$, $|H|>t$ must contain this union, therefore the levels $\cH_{t+1}$, $\cH_{t+2}, \dots,$ $\cH_{t+k-2}$ are all empty. If $\cH_t$ is of type $\wedge$, then the intersection of its sets is of size $t-k+1$. Claim \ref{issubset} implies that all sets $H\in\cH$, $|H|<t$ must be subsets of this intersection, therefore the levels $\cH_{t-k+2}$, $\cH_{t-k+3}, \dots,$ $\cH_{t-1}$ are all empty. In either case we get that $|\cH|\le k + (k-2)\cdot 0+ (n-k+1)\cdot 1=n+1$.
\end{proof}

Now we can complete the proof of the theorem:
$$|\cF|=|\cG|+|\cH|\le \binom{n-1}{2}+2(n-1)-1+n+1=\binom{n}{2}+2n-1.$$
\end{proof}

Let us make two final
remarks.
\begin{itemize}
    \item 
    Observe that for the set $L_\ell=\{\ell+1,\ell+2,\dots,n\}$ a system $\cF\subseteq 2^{[n]}$ is $L_\ell$-close Sperner if and only if for every $\ell$-subset $Y$ of $[n]$, the trace $\cF_{[n]\setminus Y}=\{F\setminus Y:F\in \cF\}$ is Sperner. Set systems with this property are called $(n-\ell)$-trace Sperner and results on the maximum size of such systems can be found in Section 4 of \cite{P}.
    \item
    A natural generalization arises in $Q^n=\{0,1,\dots,q-1\}^n$. One can partially order $Q^n$ by $a\le b$ if and only if $a_i\le b_i$ for all $i=1,2,\dots,n$. We say that $A\subseteq \{0,1,\dots,q-1\}^n$ is $L$-close Sperner for some subset $L\subseteq [n]$ if for any distinct $a,b\in A$ we have $sd(a,b):=\min\{|\{i:a_i<b_i\}|,|\{i:a_i>b_i\}|\}\in L$. One can ask for the largest number of  points in an $L$-close Sperner set $A\subseteq Q^n$. Here is a construction for $\{1\}$-close Sperner set: for $2\le i\le n$, $1\le h\le q-1$ let $(v_{i,h})_i=h$, $(v_{i,h})_1=q-h+1$ and $(v_{i,h})_j=0$ if $j\neq i$. Then it is easy to verify that $\{v_{i,h}:2\le i\le n, 1\le h\le q-1\}$ is $\{1\}$-close Sperner of size $(q-1)(n-1)$.
    
    
    An easy upper bound on the most number of points in $Q^n$ that form an $\{1\}$-close Sperner system is $O_q(n^{q-1})$. To see this, for any $a=\{a_1, a_2,\dots a_n\}\in Q^n$ let us define the set $F_a\subseteq [(q-1)n]$ as follows.
    $$F_a:=\bigcup_{i=1}^n \bigcup_{j=1}^{a_i} \{(q-1)(i-1)+j\}$$
    If $A\subseteq Q^n$ is $\{1\}$-close Sperner, then $A'=\{F_a~|~a\in A\}\subset 2^{[(q-1)n]}$ will be $\{1,2,\dots q-1\}$-close Sperner. Theorem \ref{main} implies 
    $$|A|=|A'|\le\sum_{h=0}^{q-1}\binom{(q-1)n}{h}=O_q(n^{q-1}).$$
    We conjecture that for any $q$ there exists a constant $C_q$ such that the maximum number of points in $Q^n$ that form a $\{1\}$-close Sperner system is at most $C_qn$.
    \end{itemize}

\subsubsection*{Acknowledgement}
The research of Nagy was supported by the National Research, Development and Innovation Office - NKFIH under the grants FK 132060 and K 132696.

The research of Patk\'os was supported partially by the grant of Russian Government N 075-15-2019-1926 and by the National Research, Development and Innovation Office - NKFIH under the grants FK 132060 and SNN 129364.

\end{document}